\DeclarePairedDelimiter\floor{\lfloor}{\rfloor}
\theoremstyle{plain}
\newtheorem{theorem}{Theorem}
\newtheorem{lemma}{Lemma}
\newtheorem{definition}{Definition}
\newtheorem{conjecture}{Conjecture}
\theoremstyle{definition}
\newtheorem{example}{Example}
\newtheorem{remark}{Remark}
\DeclareMathOperator{\lcm}{lcm}
\def\A {{\mathcal A}}
\def\L {{\mathcal L}}
\def\lcm {{\rm lcm}}
\def\C {{\Gamma}}
\def\CardCM {{\left|\Gamma_M\right|}}
\numberwithin{equation}{section}
\def\Z {{\mathbb Z}}
\begin{document}

\title[On Primitive Covering Numbers]{On Primitive Covering Numbers}


\author{Lenny Jones}
\address{Department of Mathematics, Shippensburg University, Pennsylvania, USA}
\email[Lenny~Jones]{lkjone@ship.edu}

\author{Daniel White}
\address{Department of Mathematics, Shippensburg University, Pennsylvania, USA}
\email[Daniel~White]{DWhite@ship.edu}


\date{\today}

\begin{abstract}
 In 2007, Zhi-Wei Sun defined a \emph{covering number} to be a positive integer $L$ such that there exists a covering system of the integers where the moduli are distinct divisors of $L$ greater than 1. A covering number $L$ is called \emph{primitive} if no proper divisor of $L$ is a covering number. Sun constructed an infinite set $\L$ of primitive covering numbers, and he conjectured that every primitive covering number must satisfy a certain condition. In this paper, for a given $L\in \L$, we derive a formula that gives the exact number of coverings that have $L$ as the least common multiple of the set $M$ of moduli, under certain restrictions on $M$. Additionally, we disprove Sun's conjecture by constructing an infinite set of primitive covering numbers that do not satisfy his primitive covering number condition.
\end{abstract}

\subjclass[2010]{Primary 11B25; Secondary 11A07, 05A99}
\keywords{covering system, covering number, congruence}

\maketitle

\section{Introduction}\label{Section:Intro}
We begin with the definition of a concept due to Erd\H{o}s \cite{8}.
\begin{definition}\label{Def:Cov}
Let $x_i$ and $m_i$ denote integers, where $x_i\ge 0$ and $m_i\ge 2$. A {\it (finite) covering system} $C$, or simply a {\it covering}, of the integers is a finite collection of congruences $z\equiv x_i \pmod{m_i}$, such that every integer satisfies at least one of these congruences.
 \end{definition}
Throughout this paper we assume that all moduli in any covering  are distinct. We write a covering  as $C=\{(x_i,m_i)\}_{i\in I}$, where $z\equiv x_i \pmod{m_i}$ is a congruence in the covering, and $I$ is some finite indexing set. For $I=\{1,2,\ldots, t\}$, we let $M$ denote the set of moduli $\left\{m_1,m_2,\ldots ,m_t\right\}$ used in $C$, and we write $\lcm(M)$ for $\lcm(m_1,m_2,\ldots, m_t)$. We also let $\C_M$ denote the set of all coverings having $M$ as the set of moduli. The main focus in this article is on coverings and sets of moduli with the following special property.
\begin{definition}\label{Def:Minimal}
A covering $C$ is called \emph{minimal} if no proper subset of $C$ is a covering. Let $M$ be a set of positive integers for which $\C_M\ne \emptyset$. We say that $M$ is \emph{minimal} if all $C\in \C_M$ are minimal.
 \end{definition}
   The definition of a minimal set of moduli in Definition \ref{Def:Minimal}
 is not superfluous since there exist sets of moduli $M$ for which some elements of $\C_M$ are minimal and some are not. The following example illustrates this phenomenon.
 \begin{example}\label{Ex:QuasiMinimal}
 Consider the set
           \[M=\{3,4,5,6,8,10,12,15,20,24,30,40,60,120\}.\] Let
         \begin{align*}
         \begin{split}
          B=&\{(0,3),(0,4),(0,5),(1,6),(6,8),(3,10),(5,12),(11,15),\\
          &\quad (7,20),(10,24),(2,30),(34,40),(59,60),(98,120)\},
          \end{split}
          \end{align*}
          and
          \begin{align*}
          \begin{split}
          C=&\{(2,3),(0,4),(0,5),(3,6),(2,8),(7,10),(6,12),(1,15),\\
          &\quad (19,20),(22,24),(13,30),(0,40),(49,60),(0,120)\}.
          \end{split}
          \end{align*}
          It is straightforward to verify that $B$ 
           and $C$ 
            are coverings so that $B,C\in \C_M$. A bit more effort shows that $B$ is minimal. However, note that the elements $(0,40)$ and $(0,120)$ can be removed from $C$ and the remaining set $\widehat{C}$ is a covering; in fact, $\widehat{C}$ is minimal.

  The covering $B$ 
   is due to Erd\H{o}s \cite{8}, while the covering $\widehat{C}$ 
    is due to Krukenberg \cite{30}. We thank Mark Kozek for pointing these out to us.
  \end{example}


There are many situations when $M$ is minimal. For example,
\[C=\{(0,2), (0,3), (1,4), (1,6), (11,12) \}\] is a covering , but it is easy to see that it is impossible to construct a covering  using any proper subset of $M=\{2,3,4,6,12\}$. This example can be generalized to the situation when $M$ is the set of all divisors $d>1$ of $2^{p-1}p$, where $p>2$ is prime \cite{42}.

In 2007, Zhi-Wei Sun \cite{42} introduced the notion of a primitive covering number. A positive integer $L$ is called a \emph{covering number} if there exists a covering  of the integers where the moduli are distinct divisors $d>1$ of $L$. A covering number $L$ is called a \emph{primitive covering number} if no proper divisor of $L$ is a covering number.
Note that if $L$ is a primitive covering number, then there exists a set of moduli $M$ with $\lcm(M)=L$ such that $M$ is minimal.

In \cite{42}, Sun proved the following theorem, which we state without proof.

\begin{theorem}\label{Thm:Sun0}
  Let $p_1,p_2,\ldots ,p_r$ be distinct primes, and let $\alpha_1,\alpha_2,\ldots, \alpha_r$ be positive integers. Suppose that
 \begin{equation}\label{Eq:Sun}
  \prod_{0<t<s}(\alpha_t+1)\ge p_s-1+\delta_{r,s},\hspace*{.2in}   \mbox{for all $s=1,2,\ldots ,r$,}
\end{equation}
where $\delta_{r,s}$ is Kronecker's delta, and the empty product $\prod_{0<t<1}(\alpha_t+1)$ is defined to be 1.
Then $p_1^{\alpha_1}p_2^{\alpha_2}\cdots p_r^{\alpha_r}$ is a covering number.
\end{theorem}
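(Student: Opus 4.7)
The plan is to construct explicitly a covering of $\Z$ whose moduli are distinct divisors greater than $1$ of $L$, proceeding in $r$ stages indexed by the primes $p_s$. Let $N_s := \prod_{t=1}^{s} p_t^{\alpha_t}$ with $N_0 = 1$, and note that $\prod_{t<s}(\alpha_t+1)$ is exactly the number of positive divisors of $N_{s-1}$. The invariant I will maintain is that after stage $s<r$ exactly one residue class modulo $N_s$ remains uncovered, while after stage $r$ the covering is complete.

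Suppose one enters stage $s$ with a single uncovered class $r_{s-1}\pmod{N_{s-1}}$, and parameterize its $p_s^{\alpha_s}$ refinements modulo $N_s$ by $k\in\{0,1,\ldots,p_s^{\alpha_s}-1\}$ via $r_{s-1}+kN_{s-1}$. By the Chinese Remainder Theorem, for any divisor $e$ of $N_{s-1}$ and any integer $j\ge 1$, a congruence $(a,p_s^j e)$ satisfying $a\equiv r_{s-1}\pmod{e}$ intersects the uncovered class in exactly one freely chosen residue class modulo $p_s^j$ in the $k$-coordinate. I then execute $\alpha_s$ sub-stages $j=1,\ldots,\alpha_s$: at sub-stage $j$ I pick $p_s-1$ distinct divisors of $N_{s-1}$ and use the corresponding congruences with moduli $p_s^j e$ to shrink the residual uncovered set (in the $k$-coordinate) from one class mod $p_s^{j-1}$ to one class mod $p_s^j$. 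After $\alpha_s$ sub-stages, exactly one residue class mod $N_s$ remains, providing the input to stage $s+1$. The budget of $p_s-1$ divisors per sub-stage is furnished by the hypothesis $\prod_{t<s}(\alpha_t+1)\ge p_s-1$.

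Stage $r$ must cover all of $\Z$, so I modify only its final sub-stage: at $j=\alpha_r$ I draw the $p_r-1$ divisors of $N_{r-1}$ from among those different from $N_{r-1}$ itself, and then adjoin one final congruence of modulus $L$ to kill the last residue class modulo $L$. Excluding $N_{r-1}$ costs one divisor, so the required inequality becomes $\prod_{t<r}(\alpha_t+1)-1\ge p_r-1$, that is, $\prod_{t<r}(\alpha_t+1)\ge p_r$, which is exactly Sun's condition with $\delta_{r,r}=1$. Distinctness of the moduli across the whole construction is automatic: across stages they differ in which primes divide them, within a stage they differ in the exponent of $p_s$, and within a sub-stage they differ in $e$.

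The main obstacle will be verifying rigorously that the free residue classes mod $p_s^j$ at sub-stage $j$ can always be aimed at the $p_s-1$ still-uncovered sub-classes of the current residual (rather than redundantly hitting already-covered ones), and that the compatibility conditions $a\equiv r_{s-1}\pmod{e}$ are simultaneously realizable for any choice of $e$. Once this bookkeeping is nailed down, the counting of available moduli across all sub-stages is precisely what forces the stated inequality $\prod_{0<t<s}(\alpha_t+1)\ge p_s-1+\delta_{r,s}$.
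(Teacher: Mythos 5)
The paper states this theorem without proof, citing Sun, so there is no internal argument of the paper to compare against line by line. Your construction is correct and is essentially the standard one. The stage invariant (exactly one uncovered residue class modulo $N_s$ after stage $s<r$), the observation that the pool of admissible moduli $p_s^{j}e$ with $e\mid N_{s-1}$ at each sub-stage has size $\tau(N_{s-1})=\prod_{0<t<s}(\alpha_t+1)$ and can be reused across sub-stages because the exponent of $p_s$ already separates them, and the one extra divisor spent at stage $r$ to keep the final modulus $L=p_r^{\alpha_r}N_{r-1}$ from colliding with a modulus $p_r^{\alpha_r}e$ together account exactly for the hypothesis \eqref{Eq:Sun}, including the Kronecker delta. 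The ``obstacle'' you flag at the end is routine rather than genuine: since $\gcd(e,p_s^{j})=1$, the Chinese Remainder Theorem lets you prescribe the residue $a$ modulo $e$ (so that the class meets the current uncovered class modulo $N_{s-1}$) and modulo $p_s^{j}$ (so that it lifts the uncovered class modulo $p_s^{j-1}$ to whichever of its $p_s$ refinements you choose) independently; hence each of the $p_s-1$ congruences at a sub-stage can be aimed at a distinct still-uncovered sub-class, the residual after the sub-stage is exactly one full class modulo $p_s^{j}N_{s-1}$, and the invariant propagates. It is worth noting that this hole-splitting-and-filling technique is precisely the one the paper itself deploys, in a special case, in the proof of Theorem \ref{Thm:SunCounterEx}, so your argument is consistent in spirit with the rest of the paper.
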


 The following theorem, which we state without proof, is also due to Sun \cite{42}, and gives sufficient conditions for a positive integer to be a primitive covering number.
\begin{theorem}\label{Thm:Sun1}
  Let $r>1$ and let $2=p_1<p_2<\cdots <p_r$ be primes. Suppose further that $p_{t+1}\equiv 1 \pmod{p_t-1}$ for all $0<t<r-1$, and $p_r\ge (p_{r-1}-2)(p_{r-1}-3)$. Then
\[p_1^{\frac{p_2-1}{p_1-1}-1}\ldots p_{r-2}^{\frac{p_{r-1}-1}{p_{r-2}-1}-1}p_{r-1}^{\floor*{\frac{p_{r}-1}{p_{r-1}-1}}}p_r\] is a primitive covering number, where $\floor*{x}$ denotes the greatest integer less than or equal to $x$.
\end{theorem}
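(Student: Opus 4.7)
The plan is to verify separately the two defining properties of a primitive covering number: that $L$ is a covering number, and that no proper divisor of $L$ is a covering number.

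For the first, I would invoke Theorem \ref{Thm:Sun0} directly. Setting $\alpha_t = (p_{t+1}-1)/(p_t-1) - 1$ for $1 \leq t \leq r-2$, $\alpha_{r-1} = \floor*{(p_r-1)/(p_{r-1}-1)}$, and $\alpha_r = 1$, these are positive integers thanks to the congruence hypothesis $p_{t+1} \equiv 1 \pmod{p_t - 1}$. Condition (\ref{Eq:Sun}) then reduces to a telescoping computation: for $1 \leq s \leq r - 1$,
\[\prod_{0 < t < s}(\alpha_t + 1) = \prod_{t=1}^{s-1} \frac{p_{t+1} - 1}{p_t - 1} = \frac{p_s - 1}{p_1 - 1} = p_s - 1,\]
meeting the bound exactly; and for $s = r$ the strict inequality $\floor*{(p_r-1)/(p_{r-1}-1)} + 1 > (p_r-1)/(p_{r-1}-1)$ forces $\prod_{0 < t < r}(\alpha_t + 1) > p_r - 1$, hence $\geq p_r$.

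For primitivity, I would suppose toward contradiction that a proper divisor $D = p_1^{a_1}\cdots p_r^{a_r}$ of $L$ (so $a_i \leq \alpha_i$ with at least one strict inequality) is a covering number, realized by some covering $C = \{(x_j, m_j)\}_{j \in J}$ whose moduli divide $D$. Since $\sum_j 1/m_j \geq 1$, a layered counting argument is the natural next step: for each prime $p_s$ in turn, project the covering onto $\Z/p_s^{\alpha_s}\Z$ and track how many residue classes each stratum of moduli (grouped by $p_s$-adic valuation) can contribute. The telescoping equality above shows that (\ref{Eq:Sun}) is tight for $s < r$, so every divisor of $L$ is required to cover all classes modulo $p_s^{\alpha_s}$; removing any exponent therefore leaves an uncovered class. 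The hypothesis $p_r \geq (p_{r-1}-2)(p_{r-1}-3)$ is used at the one prime where the telescoping inequality is strict, namely $s = r$, and rules out the possibility that the floor-induced slack at $\alpha_{r-1}$ compensates for reductions involving $p_{r-1}$ or $p_r$.

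The main obstacle will be this primitivity step, especially the subcases $a_r = 0$ and $a_{r-1} < \alpha_{r-1}$, where a naive reciprocal-sum bound is not immediately contradicted because the floor in $\alpha_{r-1}$ introduces headroom. My contingency is to sharpen the counting by projecting onto $\Z/(L/p_r)\Z$ and isolating the contribution of moduli divisible by $p_{r-1}^{\alpha_{r-1}}$, showing the induced residue multiset is strictly deficient precisely when $p_r \geq (p_{r-1}-2)(p_{r-1}-3)$.
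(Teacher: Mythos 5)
The paper states this theorem without proof, citing Sun \cite{42}, so there is no internal proof to compare your attempt against; I will assess it on its own terms. Your first half is correct and complete: with $\alpha_t=\frac{p_{t+1}-1}{p_t-1}-1$ for $t\le r-2$, $\alpha_{r-1}=\floor*{\frac{p_r-1}{p_{r-1}-1}}$ and $\alpha_r=1$, the congruence hypothesis makes these positive integers, the telescoping product gives $\prod_{0<t<s}(\alpha_t+1)=p_s-1$ for $s<r$, and the floor-plus-integrality argument gives $\ge p_r$ at $s=r$, so Theorem \ref{Thm:Sun0} applies and $L$ is a covering number. (Correctly, this half never uses $p_r\ge(p_{r-1}-2)(p_{r-1}-3)$.)

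The primitivity half has a genuine gap, and its central inference is invalid as stated. You argue that because \eqref{Eq:Sun} is tight for $s<r$, ``removing any exponent therefore leaves an uncovered class.'' But \eqref{Eq:Sun} is only a \emph{sufficient} condition for being a covering number; its failure for a proper divisor $D$ of $L$ proves nothing about $D$. Indeed, Section 3 of this very paper constructs infinitely many covering numbers $2^{\beta}q_kq_{k+1}$ that violate \eqref{Eq:Sun}, so the principle you are leaning on is not merely unproven but false. What is actually required is a lower bound on the minimal number of residues modulo $D$ left uncovered by \emph{any} system of congruences with distinct moduli dividing $D$ --- a layer-by-layer count over the primes in increasing order, in the spirit of the $\lambda_{s,t}$ bookkeeping and Lemmas \ref{lem:hole1} and \ref{lem:hole2} of Section 2, carried out for each maximal proper divisor $L/p_i$. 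You correctly guess that the hypothesis $p_r\ge(p_{r-1}-2)(p_{r-1}-3)$ must enter in the cases involving $p_{r-1}$ and $p_r$, where the floor introduces slack, but you name this as ``the main obstacle'' and offer only a contingency rather than an argument; as written, the hard half of the theorem is not proved.
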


 Theorem \ref{Thm:Sun1} produces an infinite set $\L$ of primitive covering numbers, and every element of $\L$ satisfies \eqref{Eq:Sun}. 
  In this article, we derive a formula that gives the exact number of covering systems for each $L\in \L$, when the associated set of moduli $M$ is minimal. This represents the first such counting formula of its kind to appear in the literature. In addition, we construct an infinite set of primitive covering numbers that do not satisfy \eqref{Eq:Sun}, and thereby provide infinitely many counterexamples to a conjecture of Sun \cite{42}.
\section{The Number of Coverings for $L\in \L$}
Throughout this section, we let $\L$ denote the set of all primitive covering numbers that satisfy the conditions of Theorem \ref{Thm:Sun1}. We also let $C$ be a minimal covering with distinct moduli $M$, such that \[\lcm(M)=L= p_1^{\alpha_1} \cdots p_r^{\alpha_r} \in \L,\] where $2=p_1 < p_2 < \cdots < p_r$ are prime.

\begin{definition}\label{Def:C lambda} For each pair $(s,t)$ of integers with $1\le s \le r$ and $1\le t \le \alpha_s$, define
\begin{equation*}
N_{s,t}:=\{n\in \Z : 1 \le n \le p_1^{\alpha_1} \cdots p_{s-1}^{\alpha_{s-1}} p_s^t\},
\end{equation*} 
\begin{equation}\label{C_p}
C_{p_s^t} := \{ (*,m)\in C :\, p_s^t || m \,\,\mbox{ and } \,\,  p_k \nmid m \text{ for all } k > s \}
\end{equation}
\begin{equation}\label{lambda}
 \mbox{and} \quad \lambda_{s,t} := \left| \Big\{ n\in N_{s,t} : n \text{ is not covered by } \bigcup_{i=1}^{s-1} \bigcup_{j=1}^{\alpha_i} C_{p_i^j} \cup \bigcup_{j=1}^{t} C_{p_s^j} \Big\} \right|,
\end{equation}
where $\bigcup_{i=1}^{s-1} \bigcup_{j=1}^{\alpha_i} C_{p_i^j}=\emptyset$ if $s=1$.
\end{definition}

Observe that $\lambda_{r,\alpha_r}=0$ in \eqref{lambda}, and that \eqref{C_p} implies that
\[
C = \bigcup_{i=1}^r \bigcup_{j=1}^{\alpha_i} C_{p_i^j}.
\]

\begin{lemma} \label{lem:hole1}
If $\lambda_{s,\alpha_s}=1$, then $\lambda_{s+1,1} = p_{s+1} - |C_{p_{s+1}}|$.
\end{lemma}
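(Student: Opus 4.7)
The plan is to turn $\lambda_{s,\alpha_s}=1$ into an explicit description of the integers not covered by congruences whose moduli involve only the primes $p_1,\ldots,p_s$, and then to use the minimality of $C$ to count how many of those integers are covered by $C_{p_{s+1}}$. Let $Q=p_1^{\alpha_1}\cdots p_s^{\alpha_s}$ and $D=\bigcup_{i=1}^{s}\bigcup_{j=1}^{\alpha_i}C_{p_i^j}$. Every modulus appearing in $D$ divides $Q$, so the set of integers covered by $D$ is a union of residue classes modulo $Q$; the hypothesis $\lambda_{s,\alpha_s}=1$ therefore forces the integers not covered by $D$ to form a single residue class $\{n_0+kQ:k\in\Z\}$ for some $n_0\in\{1,\ldots,Q\}$. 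Intersecting with $N_{s+1,1}$ yields exactly the $p_{s+1}$ integers $n_0,n_0+Q,\ldots,n_0+(p_{s+1}-1)Q$, so I need only determine how many of them are covered by $C_{p_{s+1}}$.

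Each $(a,m)\in C_{p_{s+1}}$ has the form $m=m'p_{s+1}$ with $m'\mid Q$, so $\gcd(Q,m)=m'$ and $m/\gcd(Q,m)=p_{s+1}$. A short CRT computation then shows that the set of $k$ for which $(a,m)$ covers $n_0+kQ$ is either empty (when $a\not\equiv n_0\pmod{m'}$) or equals the single class $\{k:k\equiv k_0\pmod{p_{s+1}}\}$ for a $k_0$ determined by $k_0Q\equiv a-n_0\pmod{p_{s+1}}$. In particular, each $(a,m)\in C_{p_{s+1}}$ covers at most one of the $p_{s+1}$ uncovered integers.

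The key step is a double application of minimality of $C$. First, if some $(a,m)\in C_{p_{s+1}}$ covered none of $n_0,\ldots,n_0+(p_{s+1}-1)Q$, then every integer covered by $(a,m)$ would satisfy $z\not\equiv n_0\pmod{Q}$ and hence already lie in the coverage of $D$, so $C\setminus\{(a,m)\}$ would still be a covering, contradicting minimality. Thus each element of $C_{p_{s+1}}$ covers exactly one of the uncovered integers. Second, if distinct $(a,m),(a',m')\in C_{p_{s+1}}$ both covered the same $n_0+k_0Q$, the CRT computation forces them to cover the same arithmetic progression $\{n_0+kQ:k\equiv k_0\pmod{p_{s+1}}\}$; every integer covered by $(a,m)$ is then either covered by $(a',m')$ or satisfies $z\not\equiv n_0\pmod{Q}$ and so is covered by $D$, so removing $(a,m)$ once again preserves the covering, contradicting minimality. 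Together these show that the assignment sending $(a,m)\in C_{p_{s+1}}$ to the unique integer in $\{n_0,n_0+Q,\ldots,n_0+(p_{s+1}-1)Q\}$ that it covers is a well-defined injection, from which $\lambda_{s+1,1}=p_{s+1}-|C_{p_{s+1}}|$ follows.

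I expect the main obstacle to be the bookkeeping in the second minimality step, where I must verify that once $(a,m)$ and $(a',m')$ agree on a single uncovered integer, every integer covered by $(a,m)$ is either covered by $(a',m')$ (those $\equiv n_0\pmod{Q}$) or by $D$ (the rest). Both facts reduce to the CRT description of the $k$-set, so a careful check of that short calculation should suffice.
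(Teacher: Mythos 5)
Your proof is correct and follows essentially the same approach as the paper: identify the $p_{s+1}$ integers in $N_{s+1,1}$ left uncovered after stage $s$, observe that each congruence in $C_{p_{s+1}}$ can cover at most one of them, and use minimality to show each covers exactly one. Your write-up is in fact more careful than the paper's, which asserts "each element of $C_{p_{s+1}}$ covers at least one" but leaves implicit both the redundancy argument behind that claim and the injectivity step (that no two congruences in $C_{p_{s+1}}$ cover the same uncovered integer); you supply both via the CRT computation and a second appeal to minimality.
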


\begin{proof}
Let
\[\A=\left\{ n\in N_{s+1,1} : n \text{ is not covered by } \bigcup_{i=1}^s \bigcup_{j=1}^{\alpha_i} C_{p_i^j} \right\}.\]
Supposing $\lambda_{s,\alpha_s}=1$, we see that $\left|\A\right|=p_{s+1}$.
In addition, each $n\in \A$ is in a unique congruence class modulo $p_{s+1}$. Hence, each element of $C_{p_{s+1}}$ may cover at most one $n\in \A$. Since $C$ is minimal, each element of $C_{p_{s+1}}$ covers at least one $n\in \A$, and thus the proof is complete.
\end{proof}

\begin{lemma} \label{lem:hole2}
If $\lambda_{s,t}=1$ and $0 < t < \alpha_s$, then $\lambda_{s,t+1} = p_s - |C_{p_s^{t+1}}|$.
\end{lemma}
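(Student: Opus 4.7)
My plan is to mirror the proof of Lemma~\ref{lem:hole1}, replacing the step ``from $p_s^{\alpha_s}$ to $p_{s+1}$'' with the analogous step ``from $p_s^{t}$ to $p_s^{t+1}$'' within the single prime $p_s$. The main role is played by an auxiliary set $\A$ of size $p_s$, which is the direct analogue of the set $\A$ of size $p_{s+1}$ appearing in the proof of Lemma~\ref{lem:hole1}.

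I would set $P = p_1^{\alpha_1}\cdots p_{s-1}^{\alpha_{s-1}} p_s^t$ and
\[
\A = \left\{ n \in N_{s,t+1} : n \text{ is not covered by } \bigcup_{i=1}^{s-1} \bigcup_{j=1}^{\alpha_i} C_{p_i^j} \cup \bigcup_{j=1}^{t} C_{p_s^j} \right\}.
\]
Every congruence appearing in this union has modulus dividing $P$, so $\lambda_{s,t}=1$ forces a unique uncovered residue class $[n^*]$ modulo $P$, whence $\A = \{n^* + kP : 0 \le k < p_s\}$ has cardinality $p_s$. A short CRT-style calculation, using that $P/p_s^t$ is coprime to $p_s$, then shows that these $p_s$ integers occupy $p_s$ distinct residue classes modulo $p_s^{t+1}$.

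Since every $(x,m) \in C_{p_s^{t+1}}$ satisfies $p_s^{t+1} \mid m$, such a congruence meets $\A$ in at most one element. Conversely, if $(x,m)$ met $\A$ in no element, then $[x] \pmod m$ would be disjoint from $[n^*] \pmod P$, so every integer in $[x]\pmod m$ would already be covered by $\bigcup_{i=1}^{s-1} \bigcup_{j=1}^{\alpha_i} C_{p_i^j} \cup \bigcup_{j=1}^{t} C_{p_s^j}$, contradicting the minimality of $C$. Hence each $(x,m) \in C_{p_s^{t+1}}$ covers exactly one element of $\A$.

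The step I expect to be the main obstacle is establishing that the resulting map $C_{p_s^{t+1}} \to \A$ is injective, which is what converts the bound $|\A \cap (\text{covered by }C_{p_s^{t+1}})| \le |C_{p_s^{t+1}}|$ into the equality required for the formula. For this I would revisit minimality: if two distinct $(x_1,m_1), (x_2,m_2) \in C_{p_s^{t+1}}$ both covered the same $n \in \A$, then since $\lcm(m_i, P) = Pp_s$ for $i=1,2$ each congruence covers exactly the residue class $[n] \pmod{Pp_s}$ inside $[n^*] \pmod P$, while outside $[n^*] \pmod P$ every integer is already covered by the earlier congruences $\bigcup_{i=1}^{s-1} \bigcup_{j=1}^{\alpha_i} C_{p_i^j} \cup \bigcup_{j=1}^{t} C_{p_s^j}$. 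Then $(x_1,m_1)$ could be removed from $C$ without losing the covering property, contradicting minimality. With injectivity in hand, the desired equality $\lambda_{s,t+1} = |\A| - |C_{p_s^{t+1}}| = p_s - |C_{p_s^{t+1}}|$ follows immediately.
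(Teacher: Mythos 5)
Your proof is correct and is essentially the argument the paper intends: the paper omits the proof of this lemma, stating only that it is ``similar'' to that of Lemma~\ref{lem:hole1}, and your argument is precisely that adaptation (an auxiliary set $\A$ of size $p_s$ occupying distinct residue classes modulo $p_s^{t+1}$, with minimality forcing each congruence of $C_{p_s^{t+1}}$ to cover exactly one element of $\A$). If anything, you are more careful than the paper's model proof, which leaves the injectivity step---needed to pass from ``each congruence covers exactly one element of $\A$'' to the exact count $p_s - |C_{p_s^{t+1}}|$---implicit.
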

The proof of Lemma \ref{lem:hole2} follows an argument similar to the proof of Lemma \ref{lem:hole1} and is omitted.


\begin{lemma} \label{Lem:Dsubset}
 Let \[L=p_1^{\alpha_1}p_2^{\alpha_2}\cdots p_{r-1}^{\alpha_{r-1}}p_r\in \L,\] so that the $\alpha_i$ satisfy the conditions of Theorem \ref{Thm:Sun1}. Let $M$ be minimal with $\lcm(M)=L$, and let
\[D=\left\{d>1: d\mid L/(p_{r-1}p_r)\right\}.\] Then $D\subseteq M$.
\end{lemma}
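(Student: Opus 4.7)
The plan is to prove, by induction on $(s,t)$ in lexicographic order, the stronger claim that for every pair $(s,t)$ with $1\le s\le r-1$, $1\le t\le \alpha_s$, and $(s,t)\ne (r-1,\alpha_{r-1})$,
\[
\lambda_{s,t}=1 \qquad\text{and}\qquad |C_{p_s^t}|=\prod_{i<s}(\alpha_i+1).
\]
Set $K_s:=p_1^{\alpha_1}\cdots p_{s-1}^{\alpha_{s-1}}$. The product $\prod_{i<s}(\alpha_i+1)$ counts the divisors of $K_s$, and by the telescoping identity $\alpha_i+1=(p_{i+1}-1)/(p_i-1)$ for $i\le r-2$ it equals $p_s-1$; this is also the maximum possible size of $C_{p_s^t}$, whose elements have the form $m' p_s^t$ with $m'\mid K_s$. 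Hence the second equality forces every such $m' p_s^t$ into $M$, and as $(s,t)$ ranges over the prescribed set of indices, these moduli sweep out exactly $D$, yielding $D\subseteq M$.

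Two observations drive the induction. First, $\lambda_{s,t}\ge 1$ at every stage $(s,t)$ strictly preceding $(r,\alpha_r)$: otherwise the congruences used through stage $(s,t)$, whose moduli all divide $|N_{s,t}|$, already cover $\Z$, so by minimality $M$ would consist solely of those moduli, contradicting $\lcm(M)=L$. Second, whenever the inductive hypothesis supplies $\lambda_{s,t-1}=1$ (or $\lambda_{s-1,\alpha_{s-1}}=1$ at a prime transition), Lemma~\ref{lem:hole2} (respectively Lemma~\ref{lem:hole1}) gives the exact identity $\lambda_{s,t}=p_s-|C_{p_s^t}|$. Combining this with $\lambda_{s,t}\ge 1$ and the \emph{a priori} bound $|C_{p_s^t}|\le p_s-1$ leaves two possibilities: either $\lambda_{s,t}=1$ and $|C_{p_s^t}|=p_s-1$ (and the induction continues), or $\lambda_{s,t}\ge 2$ and $|C_{p_s^t}|\le p_s-2$.

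The crux is ruling out the second possibility at each $(s,t)\ne (r-1,\alpha_{r-1})$. For this I would strengthen the $p$-lift argument underlying Lemmas~\ref{lem:hole1} and~\ref{lem:hole2}: without assuming $\lambda_{s,t}=1$, each congruence in the next batch $C_{p_s^{t+1}}$ (or $C_{p_{s+1}}$) still closes at most one lift per previously-uncovered residue class, yielding the multiplicative bounds
\[
\lambda_{s,t+1}\;\ge\;\lambda_{s,t}\bigl(p_s-|C_{p_s^{t+1}}|\bigr),\qquad \lambda_{s+1,1}\;\ge\;\lambda_{s,\alpha_s}\bigl(p_{s+1}-|C_{p_{s+1}}|\bigr).
\]
Since $|C_{p_{s'}^{t'}}|\le p_{s'}-1$ for $s'<r$, every factor on the right is at least $1$, so any initial deficit $\lambda_{s,t}\ge 2$ at some $(s,t)\ne (r-1,\alpha_{r-1})$ propagates to $\lambda_{r-1,\alpha_{r-1}}\ge 2$. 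At the final transition to $\lambda_{r,1}=0$, a finer counting is then needed: the $p_r\cdot\lambda_{r-1,\alpha_{r-1}}\ge 2p_r$ uncovered residues modulo $|N_{r,1}|$ split into $\lambda_{r-1,\alpha_{r-1}}$ uncovered per residue class modulo $p_r$, and the $(\alpha_{r-1}+1)(p_{r-1}-1)$ distinct moduli available in $C_{p_r}$ provide too small a budget to close all holes under distinctness, contradicting $\lambda_{r,\alpha_r}=0$. The base case $(s,t)=(1,1)$ is handled identically: $|C_{p_1}|=|C_2|\in\{0,1\}$, and $|C_2|=0$ gives $\lambda_{1,1}=2$, which propagates to the same contradiction.

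The principal obstacle is this final counting at level $(r,1)$: one must show that distinctness of moduli forces at least one of the $\lambda_{r-1,\alpha_{r-1}}\ge 2$ uncovered residues in some class modulo $p_r$ to survive, exploiting precisely the tight budget $(\alpha_{r-1}+1)(p_{r-1}-1)\ge p_r$ supplied by $\alpha_{r-1}=\floor*{(p_r-1)/(p_{r-1}-1)}$ in Theorem~\ref{Thm:Sun1}.
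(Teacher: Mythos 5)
Your skeleton is sound and runs parallel to the paper's: you use Lemmas~\ref{lem:hole1} and \ref{lem:hole2} to propagate $\lambda_{s,t}=1$, you correctly observe that minimality forces $\lambda_{s,t}\ge 1$ before the last stage, and your multiplicative bounds $\lambda_{s,t+1}\ge\lambda_{s,t}(p_s-|C_{p_s^{t+1}}|)$ are valid (the $p_s$ lifts of an uncovered class land in distinct classes modulo $p_s^{t+1}$, so each new congruence kills at most one lift per class). The identification of $\prod_{i<s}(\alpha_i+1)=p_s-1$ with the full set of admissible moduli, and of the union of these over the index range with $D$, is also correct. But the proof is not complete: the decisive step --- deriving a contradiction at level $(r,1)$ from $\lambda_{r-1,\alpha_{r-1}}\ge 2$ --- is exactly the point you label ``the principal obstacle'' and leave unproved. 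A bare budget count does not suffice there: you have at most $(\alpha_{r-1}+1)(p_{r-1}-1)\le p_r+p_{r-1}-2$ congruences in $C_{p_r}$ against $\ge 2p_r$ holes, but a single congruence $(x,p_rk)$ with small $k$ can absorb several holes in one class modulo $p_r$ at once, so ``fewer moduli than holes'' is not a contradiction. Even the refined capacity count $\sum_k c_k$, where $c_k$ is the number of residual classes a modulus $p_rk$ can cover, depends on how the $\lambda_{r-1,\alpha_{r-1}}$ uncovered classes distribute modulo the various divisors $k$ of $L/p_r$, and for some configurations this sum can reach $2p_r$ exactly, so no uniform inequality closes the argument.

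This is where your route genuinely diverges from the paper's, to its detriment. The paper first performs an exchange argument reducing to the case where the unique missing element of $D$ is $L/(p_{r-1}p_r)$; this pins the residual holes at stage $(r-1,\alpha_{r-1}-1)$ into exactly two classes modulo $p_{r-1}^{\alpha_{r-1}-1}$, forces at least three holes at stage $(r-1,\alpha_{r-1})$ with a known split (two in one class, one in another modulo $p_{r-1}^{\alpha_{r-1}-1}$), and thereby permits the clean three-case capacity bound $3(p_{r-1}-1)\alpha_{r-1}\le 3(p_r-1)<3p_r$ of \eqref{Eq:Upper}. By attacking an arbitrary first deficit, you lose control of the shape of the uncovered set at the top, and the ``finer counting'' you defer would have to be done separately for each possible location of the deficit. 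Either carry out that case analysis in full, or adopt the paper's reduction so that only one configuration needs to be counted.
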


\begin{proof}
Let $C\in \C_M$.
It suffices to prove that $C$ is not a covering if $M$ is missing exactly one element from $D$. So suppose that $M$ is missing exactly one element $m^{\prime}$ from $D$ and write \[m' = p_1^{j_1} \cdots p_{r-1}^{j_{r-1}},\] where $0 \le j_i \le \alpha_i$ and $j_{r-1} < \alpha_{r-1}$. If $m' \neq p_1^{\alpha_1} \cdots p_{r-1}^{\alpha_{r-1}-1}$ and we remove the congruence
\[\left(*,p_1^{\alpha_1} \cdots p_{r-1}^{\alpha_{r-1}-1}\right)\] from $C$ 
(whatever $*$ may be), then at least one integer in $[1,L]$ is not covered. In particular, each uncovered integer falls into the congruence class removed. We can then add the congruence $\left(*,m^{\prime}\right)$
to $C$, which covers the integers in consideration. This alteration to $C$ provides another covering. Hence, it suffices to prove that $C$ is not a covering when
\begin{equation}\label{Eq:m'} m^{\prime}=p_1^{\alpha_1} \cdots p_{r-1}^{\alpha_{r-1}-1} = L/(p_{r-1} p_r).
\end{equation}

Observe that
\begin{equation}\label{Eq:1}
\left|C_{p_s^t}\right| = \left\{
\begin{array}{cl}
1 & \qquad \mbox{if $s=1$}\\
\\
\displaystyle  \prod_{i=1}^{s-1} \frac{p_{i+1}-1}{p_i-1} = p_s-1 & \qquad \mbox{if $1<s<r-1$ and $1\le t\le \alpha_s$}\\
& \qquad \mbox{ or $s=r-1$ and $1 \le t < \alpha_{s}-1$}.
\end{array} \right.
\end{equation}
Thus, by Lemma \ref{lem:hole2} and \eqref{Eq:1}, we have that $\lambda_{1,t} = 1$ for each $1 \le t \le \alpha_1$. Additionally, \eqref{Eq:1}, in conjunction with Lemma \ref{lem:hole1}, implies that $\lambda_{2,1}=1$, and hence $\lambda_{2,t}=1$ for each $1 \le t \le \alpha_2$. Continuing in this manner, we see that $\lambda_{r-1,\alpha_{r-1}-2}=1$, and thus, $\lambda_{r-1,\alpha_{r-1}-1}=2$ by  \eqref{Eq:m'}. 
Hence,
$\left|\A\right|=2p_{r-1}$,
where
\[\A=\left\{ n\in N_{r-1,\alpha_{r-1}} : n \text{ is not covered by } \bigcup_{i=1}^{r-2} \bigcup_{j=1}^{\alpha_i} C_{p_i^j} \, \cup \bigcup_{j=1}^{\alpha_{r-1}-1} C_{p_{r-1}^j}
\right\},\]
and exactly half of the uncovered integers in $\A$ 
 are in the same congruence class modulo $p_{r-1}^{\alpha_{r-1}-1}$, and the other half in another single congruence class modulo $p_{r-1}^{\alpha_{r-1}-1}$. 
 If we assume, at best, that
\[
C_{p_{r-1}^{\alpha_{r-1}}} = \left\{(*,p_{r-1}^{\alpha_{r-1}} k)\in C : k \mid p_1^{\alpha_1} \cdots p_{r-2}^{\alpha_{r-2}} \right\},
\]
then $\lambda_{r-1,\alpha_{r-1}} = p_{r-1}+1$. Additionally, some uncovered integer in $[1,L/p_r]$ is in a different congruence class modulo $p_{r-1}^{\alpha_{r-1}-1}$ than another. If we assume, minimally, that $\lambda_{r-1,\alpha_{r-1}}=3$, then exactly two uncovered integers will be in the same congruence class modulo $p_{r-1}^{\alpha_{r-1}-1}$.

Under this assumption, we get
\begin{equation}\label{Eq:3p_r}
\left| \left\{ n\in N_{r,1} : n \text{ is not covered by } \bigcup_{i=1}^{r-1} \bigcup_{j=1}^{\alpha_i} C_{p_i^j}\right\} \right| = 3p_r,
\end{equation}
which implies that at least $3p_r$ integers in $[1,L]$ not covered by
$\cup_{i=1}^{r-1}\cup_{j=1}^{\alpha_s}C_{p_i^j}$ need to be covered by $C_{p_r}$.

On the other hand, we have at best that
\[
C_{p_r} = \left\{ (*,p_r k)\in C : k \mid L/p_r \right\}.
\]
Now, for $c=(*,m)\in C_{p_r}$, there are three possibilities.
\begin{enumerate}
\item If $p_{r-1}^{\alpha_{r-1}-1} \nmid m$, then $c$ can cover up to three uncovered integers in $[1,L]$.
\item If $p_{r-1}^{\alpha_{r-1}-1} \mid \mid m$, then $c$ can cover up to two uncovered integers in $[1,L]$.
\item If $p_{r-1}^{\alpha_{r-1}} \mid \mid m$, then $c$ can cover up to one uncovered integer in $[1,L]$.
\end{enumerate}
Using this information to construct an upper bound on the number of uncovered integers in $[1,L]$ that can be covered by $C_{p_r}$, we get:
\begin{align}\label{Eq:Upper}
\begin{split}
3 \tau \left( \frac{L}{p_r p_{r-1}^2} \right) & + 2 \tau \left( \frac{L}{p_r p_{r-1}^{\alpha_{r-1}}} \right) + \tau \left( \frac{L}{p_r p_{r-1}^{\alpha_{r-1}}} \right) \\
& = 3 \left\{ \tau \left( \frac{L}{p_r p_{r-1}^{\alpha_{r-1}}} \right) + \tau \left( \frac{L}{p_r p_{r-1}^2} \right) \right\} \\
& = 3 \left\{ (p_{r-1}-1) + (p_{r-1}-1) \left( \floor*{\frac{p_r-1}{p_{r-1}-1}} -1 \right) \right\} \\
& \le 3(p_r-1),
\end{split}
\end{align}
where $\tau(z)$ is the number of divisors of $z$. From \eqref{Eq:3p_r}, we see that \eqref{Eq:Upper} contradicts the fact that $C$ is a covering and completes the proof of the lemma.
\end{proof}

\begin{theorem}\label{Thm:Main1}
 Let $L=p_1^{\alpha_1}p_2^{\alpha_2}\cdots p_{r-1}^{\alpha_{r-1}}p_r^{\alpha_r}$. Suppose that $L\in \L$, so that $\alpha_r=1$ and the $\alpha_i$ satisfy the conditions of Theorem \ref{Thm:Sun1}. Let $M$ be minimal with $\lcm(M)=L$. Then 
\begin{equation}\label{Eq:Formula}
\CardCM =
\frac{\left(\left|C_{p_r}\right|-\left|Q\right|\right)!}{\left(p_{r-1}-\left|C_{p_{r-1}^{\alpha_{r-1}}}\right|\right)!\left(p_r-\left|Q\right|\right)!} \prod_{i=1}^{r} \left(p_i !\right)^{\alpha_i},
\end{equation}
where $Q := \left\{ (*,m) \in C_{p_r} : p_{r-1}^{\alpha_{r-1}} \nmid m \right\}$.
\end{theorem}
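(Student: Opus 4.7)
The plan is to count residue assignments for the congruences in $C$ level by level, following the prime-power structure of the moduli. By Lemma \ref{Lem:Dsubset}, $M$ contains every divisor of $L/(p_{r-1}p_r)$ greater than $1$, so the freedom in $M$ lies only among moduli involving $p_{r-1}^{\alpha_{r-1}}$ or $p_r$. The construction in Theorem \ref{Thm:Sun1} gives $\alpha_i = (p_{i+1}-1)/(p_i-1) - 1$ for $i < r-1$, whence $\prod_{i=1}^{s-1}(\alpha_i+1) = p_s-1$ for $2 \le s \le r-1$; consequently $|C_{p_s^t}| = p_s - 1$ at every level $(s,t)$ with $s \le r-2$, and at $(r-1,t)$ for $t \le \alpha_{r-1}-1$. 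Applying Lemmas \ref{lem:hole1} and \ref{lem:hole2} inductively gives $\lambda_{s,t}=1$ throughout these levels.

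For each such level, the unique uncovered residue mod $p_1^{\alpha_1}\cdots p_{s-1}^{\alpha_{s-1}} p_s^{t-1}$ lifts to $p_s$ classes mod $p_s^t$ within that region. For $m = p_s^t k \in C_{p_s^t}$, the Chinese Remainder Theorem reduces the choice of $x \bmod m$ to the pair $(x \bmod k, x \bmod p_s^t)$, where $x \bmod k$ is forced to match the uncovered residue and $x \bmod p_s^t$ may be any of the $p_s$ lifts. Assigning $p_s-1$ distinct lifts to the $p_s-1$ congruences in $C_{p_s^t}$ is an ordered selection counted by $p_s!/1! = p_s!$. Multiplying over these levels produces the factor $\prod_{i=1}^{r-2}(p_i!)^{\alpha_i} \cdot (p_{r-1}!)^{\alpha_{r-1}-1}$.

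An analogous argument at level $(r-1,\alpha_{r-1})$ yields $p_{r-1}!/u!$ where $u := p_{r-1}-|C_{p_{r-1}^{\alpha_{r-1}}}|$. The delicate stage is level $(r,1)$: the previous levels leave $u \cdot p_r$ uncovered classes mod $L$, arranged in a $u \times p_r$ grid indexed by $(U,\Z/p_r\Z)$, where $U$ is the set of $u$ uncovered residues mod $p_{r-1}^{\alpha_{r-1}}$. A Chinese Remainder Theorem analysis of each $m = p_r p_{r-1}^t k' \in C_{p_r}$ with $k' \mid p_1^{\alpha_1}\cdots p_{r-2}^{\alpha_{r-2}}$ shows that a modulus in $Q$ (where $t < \alpha_{r-1}$) covers the entire row of $u$ uncovered classes specified by $x \bmod p_r$, while a modulus in $C_{p_r}\setminus Q$ (where $t = \alpha_{r-1}$) covers the single uncovered class specified by $(x \bmod p_{r-1}^{\alpha_{r-1}}, x \bmod p_r)$.

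Minimality of $M$ forces the $|Q|$ moduli in $Q$ to occupy $|Q|$ distinct rows (else some row-covering is redundant), contributing $p_r!/(p_r-|Q|)!$ ordered choices, and forces the $|C_{p_r}|-|Q|$ moduli in $C_{p_r}\setminus Q$ to fill the remaining $u(p_r-|Q|)$ grid slots bijectively (yielding the identity $|C_{p_r}|-|Q| = u(p_r-|Q|)$ and contributing $(|C_{p_r}|-|Q|)!$). Combining all level contributions and collecting terms yields \eqref{Eq:Formula}. The main obstacle is the Chinese Remainder Theorem analysis at level $(r,1)$: showing carefully that $Q$ and $C_{p_r}\setminus Q$ partition their contributions across the uncovered grid as claimed, and that minimality forces the required row/slot disjointness.
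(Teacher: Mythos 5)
Your proposal is correct and follows essentially the same route as the paper: the same level-by-level count giving $\prod_{i=1}^{r-2}(p_i!)^{\alpha_i}(p_{r-1}!)^{\alpha_{r-1}-1}$, the same factor $p_{r-1}!/\bigl(p_{r-1}-|C_{p_{r-1}^{\alpha_{r-1}}}|\bigr)!$ at level $(r-1,\alpha_{r-1})$, and the same two-stage construction of $C_{p_r}$ (the congruences in $Q$ each covering a full class modulo $p_r$, then $C_{p_r}\setminus Q$ covering the remaining integers one at a time). Your grid/CRT framing and the explicit identity $|C_{p_r}|-|Q|=u(p_r-|Q|)$ are just a more detailed articulation of what the paper leaves implicit.
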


\begin{proof}
As in the proof of Lemma \ref{Lem:Dsubset}, we have that $\lambda_{s,t} = 1$ for $1 \le t \le \alpha_i$ when $1 \le s < r-1$, and for $1 \le t < \alpha_{r-1}$ when $s=r-1$. Hence, for such $t$, there are exactly $p_i!$ ways to choose the residues for the congruences in $C_{p_i^t}$ assuming that
\[\begin{array}{c}
C_{p_1}, C_{p_1^2}, \ldots,  C_{p_1^{\alpha_1}}, C_{p_2}, C_{p_2^2}, \ldots,  C_{p_2^{\alpha_2}}, \ldots, C_{p_i}, C_{p_i^2},\ldots, C_{p_i^{t-1}}\\
 \text{ or} \\
C_{p_1}, C_{p_1^2}, \ldots, C_{p_1^{\alpha_1}}, C_{p_2}, C_{p_2^2}, \ldots, C_{p_2^{\alpha_2}}\ldots, C_{p_{i-1}}, C_{p_{i-1}^2}, \ldots, C_{p_{i-1}^{\alpha_{i-1}}}
\end{array}\]

are already determined, if $t>1$ or if $t=1$, respectively. Inductively, we have that there are
\begin{equation}\label{eqn:first}
(p_1!)^{\alpha_1} \cdots (p_{r-2}!)^{\alpha_{r-2}} (p_{r-1}!)^{\alpha_{r-1}-1}
\end{equation}
ways to construct the set
\begin{equation}\label{eqn:set}
\left\{ C_{p_{r-1}}, C_{p_{r-1}^2}, \ldots ,C_{p_{r-1}^{\alpha_{r-1}-1}}\right\} \cup \bigcup_{i=1}^{r-2} \left\{ C_{p_{i}}, C_{p_{i}^2}, \ldots ,C_{p_{i}^{\alpha_i}}\right\}.
\end{equation}
When constructing $C_{p_{r-1}^{\alpha_{r-1}}}$, we must first choose which integers in $[1,L/p_r]$ to cover that are not already covered by (\ref{eqn:set}). Since there are $p_{r-1}$ such integers, each in a difference congruence class modulo $p_{r-1}^{\alpha_{r-1}}$, there are exactly
\begin{equation}\label{eqn:second}
\begin{pmatrix}
  p_{r-1}\\
  \left|C_{p_{r-1}^{\alpha_{r-1}}}\right|
\end{pmatrix}
\cdot \left|C_{p_{r-1}^{\alpha_{r-1}}}\right|! = \frac{p_{r-1}!}{\left(p_{r-1} - \left|C_{p_{r-1}^{\alpha_{r-1}}}\right|\right)!}
\end{equation}
ways to choose the residues for the congruences in $C_{p_{r-1}^{\alpha_{r-1}}}$. At this point, there are
\[
p_r \lambda_{r-1,\alpha_{r-1}} = p_r\left(p_{r-1}-\left|C_{p_{r-1}^{\alpha_{r-1}}}\right|\right)
\]
integers uncovered in $[1,L]$. In particular, there are $p_{r-1}-\left|C_{p_{r-1}^{\alpha_{r-1}}}\right|$ uncovered integers in each congruence class modulo $p_r$. Finally, we construct $C_{p_r}$ in two steps. First, we consider the congruences which will reside in $Q$. Each $(*,m) \in Q$ will cover all $p_{r-1}-\left|C_{p_{r-1}^{\alpha_{r-1}}}\right|$ of these uncovered integers in one of the congruence classes modulo $p_r$. Hence, we choose the congruence class modulo $p_r$, and then the congruence that covers the class. There are
\begin{equation}\label{eqn:third}
{p_r \choose |Q|} \cdot |Q|! = \frac{p_r!}{(p_r - |Q|)!}
\end{equation}
ways to do this. The remaining uncovered integers in $[1,L]$ must be covered by $C_{p_r} \setminus Q$. Each congruence in $C_{p_r} \setminus Q$ covers exactly one of the remaining uncovered integers in $[1,L]$. There are
\begin{equation}\label{eqn:fourth}
(|C_{p_r}|-|Q|)!
\end{equation}
ways to choose the residues for these congruences. Using (\ref{eqn:first}), (\ref{eqn:second}), (\ref{eqn:third}), and (\ref{eqn:fourth}), we conclude that
\begin{align*}
\CardCM &=
\prod_{i=1}^{r-2} \left(p_i!\right)^{\alpha_i} \cdot \left(p_{r-1}!\right)^{\alpha_{r-1}-1} \cdot \frac{p_{r-1}!}{\left(p_{r-1} - \left|C_{p_{r-1}^{\alpha_{r-1}}}\right|\right)!} \cdot \frac{p_r!}{\left(p_r - |Q|\right)!} \cdot \left(|C_{p_r}|-|Q|\right)!\\
&=\frac{\left(\left|C_{p_r}\right|-\left|Q\right|\right)!}{\left(p_{r-1}-\left|C_{p_{r-1}^{\alpha_{r-1}}}\right|\right)!\left(p_r-\left|Q\right|\right)!} \prod_{i=1}^{r} \left(p_i !\right)^{\alpha_i},
\end{align*}
and the proof is complete.
\end{proof}

\begin{remark}
For many ``small" values of $L\in \L$, formula \eqref{Eq:Formula} reduces to
\[\CardCM=\prod_{i=1}^{r} \left(p_i !\right)^{\alpha_i}.\] For example, straightforward calculations show that
\[M=\{2, 4, 5, 8, 10, 16, 20, 40, 80\}\]
is minimal and $L=2^4\cdot 5\in \L$. In this case, we have that $p_r=p_2=5$, $p_{r-1}=p_1=2$,
\[p_r=\left|C_{p_r}\right| \quad \mbox{and} \quad p_{r-1}=\left|C_{p_{r-1}^{\alpha_{r-1}}}\right|,\]
so that
\[\CardCM=\left(2!\right)^4\cdot \left(5!\right)=1920.\]
\end{remark}
\section{Counterexamples to a Conjecture of Sun}
In \cite{42}, Sun made the following conjecture.
\begin{conjecture}\label{Conj:Sun}
  Any primitive covering number can be written as $p_1^{a_1}\cdots p_r^{a_r}$, where $p_1,\ldots,p_r$ are distinct primes and $a_1,\ldots ,a_r$ are positive integers that satisfy \eqref{Eq:Sun}.
\end{conjecture}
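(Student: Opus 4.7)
The plan is to disprove Conjecture \ref{Conj:Sun} by exhibiting an infinite family of primitive covering numbers that fail \eqref{Eq:Sun}. First observe that at $s=1$ the inequality \eqref{Eq:Sun} reduces to $1 \ge p_1 - 1$, and so forces $p_1 = 2$. Consequently any counterexample either omits the prime $2$ (which demands a covering by odd moduli, a notoriously difficult regime) or else retains $p_1 = 2$ and fails the inequality at some index $s \ge 2$. The cleanest attack is the latter, with the most attractive target being $s = r$, where the Kronecker delta sharpens the inequality to $\prod_{t<r}(\alpha_t+1) \ge p_r$.

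The family I would construct has the shape
\[
L_n = 2^{\alpha_1} p_2^{\alpha_2} \cdots p_{r-1}^{\alpha_{r-1}} \, q_n,
\]
where the primes $p_2,\ldots,p_{r-1}$ and the exponents $\alpha_i$ are fixed, and $q_n$ ranges over an infinite set of primes in a prescribed arithmetic progression (for instance $q_n \equiv 1 \pmod{p_{r-1}-1}$, to keep the residue arithmetic compatible with the earlier layers). By Dirichlet's theorem this supplies infinitely many admissible $q_n$, and by further demanding $q_n > \prod_{t<r}(\alpha_t+1)$ we automatically arrange that \eqref{Eq:Sun} fails at $s=r$. The guiding intuition is to pick the base $p_1^{\alpha_1}\cdots p_{r-1}^{\alpha_{r-1}}$ just barely short of being a covering number, so that the ``reservoir'' of residues left uncovered after the congruences modulo prime powers of $p_1,\ldots,p_{r-1}$ is small enough---and structured enough---to be mopped up by residue classes modulo $q_n$, yet cannot be eliminated without using $q_n$ at all.

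Given this template, the verification splits into three tasks. Existence of a covering for each $L_n$ is handled by an explicit layered construction in the spirit of Theorem \ref{Thm:Sun0}: cover residues modulo $p_i^{\alpha_i}$ step by step, then finish with $q_n$. Failure of \eqref{Eq:Sun} is an arithmetic inequality holding by choice of $q_n$. The real obstacle, and the bulk of the argument, is proving \emph{primitivity}: for every prime $p \mid L_n$, the integer $L_n / p$ must not be a covering number. When $p = q_n$, the quotient is $p_1^{\alpha_1}\cdots p_{r-1}^{\alpha_{r-1}}$, which by design just fails to be a covering number; when $p = p_i$ for some $i < r$, I would adapt the hole-counting method in the proof of Lemma \ref{Lem:Dsubset}, using the $\lambda_{s,t}$ bookkeeping of Definition \ref{Def:C lambda} together with Lemmas \ref{lem:hole1} and \ref{lem:hole2}, to show that removing a power of $p_i$ leaves strictly more residues uncovered than the remaining moduli (including the single copy of $q_n$) can absorb.

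I expect the primitivity verification to be the main obstacle. The covering construction is largely mechanical once the exponents $\alpha_i$ are set, and the failure of \eqref{Eq:Sun} is automatic, but ruling out every way the proper divisors of $L_n$ might form a covering requires a careful tally of how many residue classes each modulus can consume, in the exact style of the divisor-counting bound \eqref{Eq:Upper}. Varying $n$ then yields infinitely many distinct counterexamples and thereby disproves Conjecture \ref{Conj:Sun}.
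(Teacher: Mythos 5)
Your plan has a fatal structural flaw: you arrange for \eqref{Eq:Sun} to fail at the index $s=r$, but that particular instance of the condition is \emph{provably necessary} for a primitive covering number, so no counterexample can be manufactured there. Concretely, suppose $L=p_1^{\alpha_1}\cdots p_r^{\alpha_r}$ is primitive with covering $C$, and let $L'=L/p_r^{\alpha_r}$. The congruences of $C$ whose moduli are prime to $p_r$ have moduli that are distinct divisors $d>1$ of $L'$; since $L$ is primitive, $L'$ is not a covering number, so these congruences leave some full residue class $a \bmod L'$ uncovered, which splits into $p_r^{\alpha_r}$ classes modulo $L$. A congruence modulo $dp_r^{j}$ with $d\mid L'$ meets the class $a \bmod L'$ in at most one class modulo $L'p_r^{j}$, hence covers at most $p_r^{\alpha_r-j}$ of those $p_r^{\alpha_r}$ classes. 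Summing over all available moduli divisible by $p_r$ gives at most $\tau(L')\,\frac{p_r^{\alpha_r}-1}{p_r-1}$ covered classes, and requiring this to be at least $p_r^{\alpha_r}$ forces $\tau(L')=\prod_{0<t<r}(\alpha_t+1)\ge p_r$, which is exactly the $s=r$ case of \eqref{Eq:Sun}. In your family, with $\alpha_r=1$ and $q_n>\prod_{t<r}(\alpha_t+1)$ and the base $p_1^{\alpha_1}\cdots p_{r-1}^{\alpha_{r-1}}$ chosen to fall short of being a covering number, the same count shows $L_n$ is not a covering number at all: the $\prod_{t<r}(\alpha_t+1)$ moduli divisible by $q_n$ cannot absorb the $q_n$ subclasses lying over even a single uncovered class modulo $L_n/q_n$. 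So your family consists of integers that are either non-covering numbers or non-primitive, and the ``main obstacle'' you identify (verifying primitivity) never arises because the earlier steps already fail.

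The paper's counterexamples instead break \eqref{Eq:Sun} at an \emph{intermediate} index: $L=2^{q_k-\delta}q_kq_{k+1}$ violates the inequality at $s=2$, since $\alpha_1+1=q_k-\delta+1<q_k-1$ for $\delta\ge 3$, while the $s=r$ inequality remains comfortably satisfied. The prime number theorem supplies infinitely many $q_k$ with $q_{k+1}/q_k$ close enough to $1$ that an explicit layered covering exists. The paper also sidesteps the primitivity verification you flag as the bulk of the work: it observes that every divisor of $L$ that could possibly be a covering number has the form $2^{\beta}q_kq_{k+1}$ with $\beta\le q_k-\delta$, and every such number also violates \eqref{Eq:Sun}; hence \emph{whichever} divisor is primitive is already a counterexample, and one never needs to identify it. To salvage your approach you would need to redesign the family so that the failure of \eqref{Eq:Sun} occurs at some $2\le s\le r-1$, and you should consider adopting the paper's trick of pushing primitivity onto an unspecified divisor rather than proving it directly.
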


Conjecture \ref{Conj:Sun} is false and the following theorem provides infinitely many counterexamples. We let $q_n$ denote the $n$th prime number.

\begin{theorem}\label{Thm:SunCounterEx}
For any $\delta\in \Z$, with $\delta\ge 3$, there exist infinitely many primitive covering numbers of the form $2^{\beta} q_k q_{k+1}$, where $\beta\le q_k-3$.  Consequently, each such primitive covering number fails to satisfy \eqref{Eq:Sun} and provides a counterexample to Conjecture \ref{Conj:Sun}.
\end{theorem}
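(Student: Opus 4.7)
The plan is to construct, for each sufficiently large index $k$, a primitive covering number of the form $L(k) = 2^{\beta^*} q_k q_{k+1}$, where $\beta^* = \beta^*(k)$ is defined to be the least positive integer $\beta$ for which $2^\beta q_k q_{k+1}$ is a covering number. (Since Sun's Theorem~\ref{Thm:Sun0} makes $\beta = q_k - 2$ always work for large $k$, the minimum exists.) Choosing $\beta^*$ minimally will make $L(k)/2$ automatically fail to be a covering number, so primitivity reduces to a classical fact about $2^a p$. Condition~\eqref{Eq:Sun} for $L(k)$ (with $p_2 = q_k$) requires $\beta^* + 1 \ge q_k - 1$, so the whole theorem reduces to producing infinitely many $k$ with $\beta^*(k) \le q_k - 3$.

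To exhibit such $k$, I would construct an explicit covering for $\beta = q_k - 3$. Start with the standard chain of powers of $2$, namely $(0,2),(1,4),(3,8),\ldots,(2^{\beta-1}-1,2^\beta)$, which covers every integer except those congruent to $-1 \pmod{2^\beta}$. Modulo $L = 2^\beta q_k q_{k+1}$ these uncovered residues form a grid $G \cong \Z/q_k\Z \times \Z/q_{k+1}\Z$ of size $q_k q_{k+1}$. The remaining available divisors of $L$ split, according to odd part, into three families, each with $\beta+1$ members indexed by $0 \le j \le \beta$: row moduli $2^j q_k$ (each covering a full row of $G$), column moduli $2^j q_{k+1}$ (a full column), and cell moduli $2^j q_k q_{k+1}$ (a single cell). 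By the Chinese Remainder Theorem one may, for each such congruence, independently prescribe the $2$-adic residue (forced to $\equiv -1 \pmod{2^j}$ for compatibility with $G$) and the row, column, or cell label. Choosing $\beta+1$ distinct row labels, $\beta+1$ distinct column labels, and then using the $\beta+1$ cell moduli to finish the remaining $(q_k - \beta - 1) \times (q_{k+1} - \beta - 1)$ subgrid succeeds exactly when $(q_k - \beta - 1)(q_{k+1} - \beta - 1) \le \beta + 1$. For $\beta = q_k - 3$ this reduces to $q_{k+1} \le (3 q_k - 6)/2$, which by Nagura's refinement of Bertrand's postulate holds for all sufficiently large $k$; hence $\beta^*(k) \le q_k - 3$ infinitely often.

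Primitivity of $L(k) = 2^{\beta^*} q_k q_{k+1}$ then falls out cleanly. Its maximal proper divisors are $L(k)/2$, $L(k)/q_k$, and $L(k)/q_{k+1}$. The first, $2^{\beta^* - 1} q_k q_{k+1}$, is not a covering number by the minimality of $\beta^*$. For $L(k)/q_k = 2^{\beta^*} q_{k+1}$ and $L(k)/q_{k+1} = 2^{\beta^*} q_k$ I invoke the classical fact that $2^a p$ is a covering number if and only if $a \ge p - 1$; since $\beta^* \le q_k - 3 < q_k - 1 \le q_{k+1} - 1$, neither is a covering number. Because every proper divisor of $L(k)$ divides one of these three, no proper divisor of $L(k)$ is a covering number, so $L(k)$ is primitive. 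Letting $k$ range over the infinitely many valid indices then yields the desired infinite family of counterexamples to Conjecture~\ref{Conj:Sun}.

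The main obstacle I anticipate is executing the grid construction in full detail. The compatibility constraints $c \equiv -1 \pmod{2^j}$ must be imposed simultaneously across the three families of moduli (which have different $2$-adic depths), and the row, column, and cell labels in the $q_k$- and $q_{k+1}$-directions must be chosen disjointly within each family and complementarily across families so that the union is all of $G$. These choices decouple by the Chinese Remainder Theorem, so the construction is essentially a bookkeeping argument, but the verification must be spelled out carefully. A minor secondary point is that the classical characterization of $2^a p$ as a covering number, while standard, may need a short self-contained derivation along the lines of Lemma~\ref{Lem:Dsubset}: namely, after using any set of power-of-$2$ moduli there remain uncovered residues hitting all $p$ classes modulo $p$, and each congruence with modulus $2^j p$ picks off at most one class, so at least $p$ such congruences (hence $a + 1 \ge p$) are required.
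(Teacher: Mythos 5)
Your proposal is correct and follows essentially the same route as the paper: after the tower of $2$-power moduli leaves a single residue class modulo $2^{\beta}$, you fill the resulting $q_k\times q_{k+1}$ grid of holes with row moduli $2^jq_k$, column moduli $2^jq_{k+1}$, and cell moduli $2^jq_kq_{k+1}$, which is exactly the paper's construction (specialized to $\delta=3$, which suffices since the stated conclusion does not depend on $\delta$), and your descent to a primitive divisor via minimality of $\beta^*$ together with the fact that $2^aq$ requires $a\ge q-1$ matches the paper's closing observation that the only candidate covering divisors of $L$ are of the form $2^{\beta}q_kq_{k+1}$. The only cosmetic differences are your use of Nagura's bound in place of the prime number theorem and the explicit $\beta^*$ bookkeeping.
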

\begin{proof}
  Let $\delta\in \Z$ with $\delta\ge 3$. By the prime number theorem, there exist infinitely many primes $q_k$ such that
\begin{equation}\label{Eq:PNT}
\dfrac{q_k}{q_{k+1}}\ge \dfrac{\delta-1}{\delta}+\dfrac{\delta-1}{q_{k+1}}.
\end{equation}
Rewriting \eqref{Eq:PNT}, we have that
\begin{equation}\label{Eq:MH}
q_k-\delta +1\ge \left(\delta-1\right)\left(q_{k+1}-\left(q_k-\delta+1\right)\right).
\end{equation}
Let $q_k$ be a prime that satisfies \eqref{Eq:MH}, and let $L=2^{q_k-\delta}q_kq_{k+1}$. Note that $L$ does not satisfy \eqref{Eq:Sun}. We claim that $L$ is a covering number. To establish the claim, we construct a covering using the divisors of $L$ in the following way. We first use the moduli
\[2,\quad 2^2, \quad 2^3, \ldots, 2^{q_k-\delta}.\] This leaves one hole modulo $2^{q_k-\delta}$. We introduce the prime $q_k$ to split this single hole into $q_k$ holes. We can fill $q_k-\delta+1$ of these holes using the moduli
\[q_k,\quad 2q_k,\quad 2^2q_k,\ldots, 2^{q_k-\delta}q_k.\]  Denote the remaining $q_k-\left(q_k-\delta+1\right)=\delta-1$ holes as $A_1, A_2, A_3, \ldots, A_{\delta-1}$.

  Now we introduce the prime $q_{k+1}$, and we split each $A_i$ into $q_{k+1}$ holes. For each $i$, we can use the moduli
\[q_{k+1}, \quad 2q_{k+1}\quad 2^2q_{k+1},\ldots, 2^{q_k-\delta}q_{k+1}\]
to fill $q_k-\delta+1$ of the $q_{k+1}$ holes. Therefore, at this point, we have a total of
\[\left(\delta-1\right)\left(q_{k+1}-\left(q_k-\delta+1\right)\right)\]
holes left to fill. However, we still have the $q_k-\delta+1$ unused moduli
\[q_kq_{k+1},\quad 2q_kq_{k+1},\quad  2^2q_kq_{k+1},\ldots, 2^{q_k-\delta}q_kq_{k+1}.\]
Hence, by \eqref{Eq:MH}, we have established that $L$ is a covering number.

If $L$ itself is primitive, then $L$ provides a counterexample to Conjecture \ref{Conj:Sun}. So suppose that $L$ is not primitive. It is easy to see that the only divisors of $L$ that are candidates for covering numbers are of the form $D_{\beta}=2^{\beta}q_kq_{k+1}$, where $\beta<q_k-\delta$. Therefore, some proper divisor $D_{\beta}$ of $L$ is a primitive covering number. Note that $D_{\beta}$ also fails to satisfy \eqref{Eq:Sun}, and the proof is complete.
\end{proof}

We provide three concrete examples that arise from Theorem \ref{Thm:SunCounterEx} with $\delta=3$. In each example, we give a covering $C_i$ using divisors $d>1$ of $L_i$.
\begin{example}\label{Ex:1} $L_1=2^8\cdot 11\cdot 13$\\\label{Section:L1}
Let
\begin{align*}
  C_1&=\{(1,2), (0,4), (2,8), (0,11), (2,13), (6,16), (20,22), (20,26), (30,32),  (34,44),\\
  & \qquad (6,52), (46,64), (14,88), (38,104), (14,128), (138,143), (62,176), (30,208),\\
  & \qquad  (78,256), (94,286), (142,352), (206,416), (226,572), (654,704), (14,832), \\
  & \qquad (1062,1144), (206,1408), (78, 1664), (1214,2288), (2510, 2816), (2766, 3328),\\
  & \qquad  (334, 4576), (4558,9152), (7374,18304)\}.
\end{align*}
\end{example}

\begin{example}\label{Ex:2} $L_2=2^{14}\cdot 17\cdot 19$\\ \label{Section:L2}
Let
\begin{align*}
  C_2&=\{(1, 2), (0, 4), (2, 8), (14, 16), (0, 17), (1, 19), (22, 32), (14, 34), (8, 38),\\
   & \qquad (6, 64), (54, 68), (38, 76), (102, 128), (94, 136), (6, 152), (38, 256),\\
   & \qquad (70, 272), (262, 304), (245, 323), (422, 512), (486, 544), (230, 608),\\
   & \qquad (678, 1024), (358, 1088), (358, 1216), (1078, 1292), (1190, 2048),\\
   & \qquad (934, 2176), (1318, 2432), (1662, 2584), (2214, 4096), (4262, 4352),\\
   & \qquad (4774, 4864), (2070, 5168), (166, 8192), (2214, 8704), (678, 9728),\\
   & \qquad (12454, 16384), (8358, 17408), (15526, 19456),  (32934, 34816),\\
   & \qquad (30886, 38912), (2982, 41344), (28838, 69632), (41126, 77824),\\
   & \qquad (20646, 139264), (102566, 155648), (95910, 165376), (200870, 278528),\\
   & \qquad (20646, 311296), (129190, 330752), (166, 661504)\}.
 \end{align*}
 \end{example}
\begin{example}\label{Ex:3} $L_3=2^{16}\cdot 19\cdot 23$\\\label{Section:L3}
Let
\begin{align*}
  C_3&=\{(1, 2), (0, 4), (2, 8), (6, 16), (4, 19), (13, 23), (14, 32), (16, 38), (24, 46),\\
   & \qquad (62, 64), (62, 76), (18, 92), (30, 128), (150, 152), (30, 184), (222, 256),\\
   & \qquad (238, 304), (238, 368), (350, 512), (382, 608), (318, 736), (94, 1024),\\
   & \qquad (286, 1216), (1438, 1472), (1630, 2048), (478, 2432), (1758, 2944),\\
   & \qquad (1510, 3496), (606, 4096), (4446, 4864), (4190, 5888), (5822, 6992),\\
   & \qquad (6750, 8192), (3166, 9728), (5726, 11776), (2654, 16384), (11870, 19456),\\
   & \qquad (7774, 23552), (15646, 27968), (10846, 32768), (29278, 38912),\\
   & \qquad (25182, 47104), (59998, 65536), (23134, 77824), (2654, 94208),\\
   & \qquad (29534, 111872), (141918, 155648), (182878, 188416), (72798, 223744),\\
   & \qquad (59998, 311296), (207454, 376832), (326238, 447488), (158302, 622592),\\
   & \qquad (289374, 753664), (676446, 894976), (813662, 1245184), (813662, 1507328),\\
   & \qquad (977502, 1789952), (404062, 3579904), (1288798, 7159808),\\
   & \qquad (12544606, 14319616), (9071198, 28639232)\}.
 \end{align*}
 \end{example}
We note that in each of the previous examples not all divisors $d>1$ of $L_i$ are used as moduli in $C_i$. In fact, it is easy to show that equality cannot hold in \eqref{Eq:MH} for any value of $\delta$.


\begin{thebibliography}{3}

\bibitem{8}
P.~Erd\H{o}s,  {On integers of the form $2\sp
  k+p$ and some related problems}, {\it Summa Brasil. Math.}, (1950), 113--123.

\bibitem{30} C. E. Krukenberg,  {Covering sets of the integers}, Ph.D. thesis, University of Illinois, Urbana-Champaign, (1971).

\bibitem{42} Zhi-Wei Sun,  {On covering numbers}, {\it Combinatorial number theory}, 443-–453, de Gruyter, Berlin, 2007.






\end{thebibliography}
\end{document}